\newcommand\reallywidehat[1]{%
\savestack{\tmpbox}{\stretchto{%
  \scaleto{%
    \scalerel*[\widthof{\ensuremath{#1}}]{\kern-.6pt\bigwedge\kern-.6pt}%
    {\rule[-\textheight/2]{1ex}{\textheight}}
  }{\textheight}%
}{0.5ex}}%
\stackon[1pt]{#1}{\tmpbox}%
}
\theoremstyle{plain}
\newtheorem{theorem}{Theorem}
\newtheorem{lemma}[theorem]{Lemma}
\newtheorem*{thm}{Theorem}
\theoremstyle{definition}
\newtheorem*{remark*}{Remark}
\numberwithin{equation}{section}
\begin{document}
\title[]{Guessing cards with complete feedback}

\address[]{Department of Mathematics, University of Washington, Seattle, WA 98195}

\author[]{Andrea Ottolini}
\email{ottolini@uw.edu}

\author[]{Stefan Steinerberger}
\email{steinerb@uw.edu}

\thanks{S.S. was partially supported by the NSF (DMS-2123224) and the Alfred P. Sloan Foundation.}

\subjclass[2010]{62L99, 60G40, 92A25} 

\begin{abstract}
We consider the following game that has been used as a way of testing claims of extrasensory perception (ESP). One is given a deck of $mn$ cards comprised of $n$ distinct types each of which appears exactly $m$ times: this deck is shuffled and then cards are discarded from the deck one at a time from top to bottom. At each step, a player (whose psychic powers are being tested) tries to guess the type of the card currently on top, which is then revealed to the player before being discarded. We study the expected number $S_{n,m}$ of correct predictions a player can make: one could always guess the exact same type of card which shows that one can achieve $S_{n,m} \geq m$. We prove that the optimal (non-psychic) strategy is just slightly better than that and
$$ S_{n,m} = m+\frac{\pi}{\sqrt{2}}\sqrt{m\ln n} + o(\sqrt{m \ln{n}})$$
 whenever $(\ln n)^{3+\varepsilon}\ll  m$. This is very different from the case where $m$ is fixed and $n \rightarrow \infty$ (He \& Ottolini) and similar to the case of fixed $n$ and $m \rightarrow \infty$ (Graham \& Diaconis). The case $m=n$ answers a question of Diaconis. 
\end{abstract}

\maketitle

\section{Introduction}
\subsection{Zener cards} Sometimes people present claims of having powers of extrasensory perceptions: a natural framework (proposed by the psychologist K. Zener and the botanist J. Rhine) in which to test such a hypothesis is that of card guessing (so-called Zener cards).
Consider a well-mixed deck of $mn$ cards comprised of $n$ distinct types of cards each of which appears exactly $m$ times. The deck is well shuffled and then placed in front of the player (who has full knowledge of the composition of the deck). The player then has to guess the type of the card on top; after the guess is made, the card is shown to the player and then discarded from the deck. The game continues until the deck runs over. Assuming the player does \textit{not} have psychic abilities, how many correct guesses can one expect?

\begin{center}
\begin{figure}[h!]
    \centering
    \includegraphics[width=0.5\textwidth]{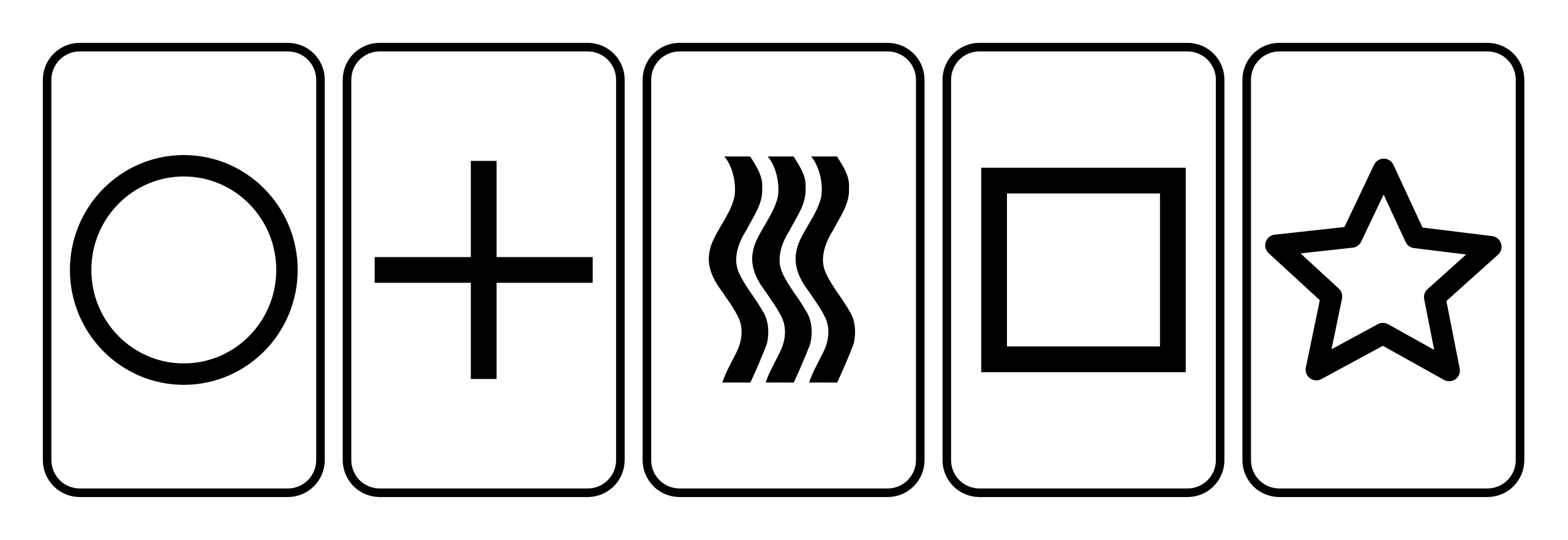}
    \caption{Zener cards: the figure shows the $n=5$ different types each of which appears $m=5$ times for a total of 25 cards.}
    \label{fig:my_label}
\end{figure}
\end{center}

\vspace{-15pt}

A simple strategy would be to always guess the same type (say, the circle card). One is guaranteed to make exactly $m$ correct guesses. The optimal strategy is to memorize all cards that have been discarded up to now and guess the type of card that has, so far, appeared the fewest amounts of times (the optimality of this strategy was proven by Diaconis \& Graham \cite{DG81}). The natural question is now, assuming no powers of extrasensory perception,  how many correct guesses can be expected under this optimal strategy? 
This game has also been analyzed in connection with clinical trials \cite{Blackwell1957, EFRON1971} and is generally well studied: for other results and variations, we refer to 
\cite{C98,Diaconis1978,DG81,diaconis2020card,diaconis2020guessing,KP01,KT21,KPP09,L21,P09,P91,S21}.

\subsection{Results.} For any given $m,n$, we consider the quantity $S_{n,m}$ describing the expected number of correctly guessed cards under the optimal strategy.
Two types of regimes are well understood. The first regime deals with the case where $n$, the number of distinct types, is fixed and the multiplicity $m$ with which each type appears becomes larges.
\begin{thm}[Diaconis \& Graham \cite{DG81}]
 For the number of different types $n$ fixed and the multiplicity $m$ going to infinity, 
\begin{equation}\label{largemsmalln}
S_{n,m}=m+\frac{\pi}{2}M_n\sqrt{m}+o_n(\sqrt m),
\end{equation}
where $M_n$ denotes the expected value of the maximum of $n$ normal random variables.
\end{thm}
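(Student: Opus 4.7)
The plan is to write $S_{n,m}$ as a sum of conditional success probabilities and then apply a Gaussian approximation to the vector of remaining card counts, exploiting that $n$ is fixed while $m\to\infty$.

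Let $N_k^{(i)}$ denote the number of type-$i$ cards still in the deck just before step $k+1$, and set $Y_k^{(i)}=N_k^{(i)}-(mn-k)/n$. Under the optimal strategy the conditional probability of a correct guess at step $k$ equals $\max_iN_k^{(i)}/(mn-k)$, so using $\sum_iN_k^{(i)}=mn-k$,
\begin{equation*}
S_{n,m}=m+\sum_{k=0}^{mn-1}\mathbb{E}\!\left[\frac{\max_iY_k^{(i)}}{mn-k}\right].
\end{equation*}
The random vector $(N_k^{(1)},\dots,N_k^{(n)})$ follows a multivariate hypergeometric law, with marginal variance $\sigma_k^2=k(mn-k)(n-1)/[n^2(mn-1)]$ and the linear constraint $\sum_iY_k^{(i)}=0$. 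A multivariate local CLT for sampling without replacement (derivable via Stirling on the explicit density) gives, uniformly for $t=k/(mn)$ in compact subsets of $(0,1)$, convergence of $(Y_k^{(i)}/\sigma_k)_i$ as $m\to\infty$ to the constrained Gaussian $\sqrt{n/(n-1)}\,(Z_i-\bar Z)_i$, with $Z_1,\dots,Z_n$ i.i.d.\ standard normal and $\bar Z=\frac{1}{n}\sum_iZ_i$. Since $\max_i(Z_i-\bar Z)=\max_iZ_i-\bar Z$ has expectation $M_n$, this produces
\begin{equation*}
\mathbb{E}\!\left[\max_iY_k^{(i)}\right]=M_n\sqrt{m\,t(1-t)}+o(\sqrt m),
\end{equation*}
uniformly in the bulk.

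Substituting $t=k/(mn)$ and recognizing a Riemann sum,
\begin{equation*}
\sum_{k=0}^{mn-1}\mathbb{E}\!\left[\frac{\max_iY_k^{(i)}}{mn-k}\right]=M_n\sqrt m\int_0^1\sqrt{\frac{t}{1-t}}\,dt+o(\sqrt m)=\frac{\pi}{2}M_n\sqrt m+o(\sqrt m),
\end{equation*}
since $\int_0^1\sqrt{t/(1-t)}\,dt=\pi/2$ (substitute $t=\sin^2\theta$).

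The main obstacle will be to upgrade the pointwise convergence in distribution to convergence of $\mathbb{E}[\max_iY_k^{(i)}/\sigma_k]$ uniformly in $k$: this requires quantitative local CLT estimates combined with sub-Gaussian concentration for sampling without replacement (Serfling-type inequalities) in order to secure uniform integrability of the normalized maximum. The boundary regime where $\sigma_k=O(1)$, i.e.\ $k=O(1)$ or $mn-k=O(1)$, has to be treated separately, but there the trivial bound $\max_iY_k^{(i)}/(mn-k)\le 1$ makes the total contribution only $O(1)=o(\sqrt m)$.
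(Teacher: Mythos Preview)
The paper does not actually prove this theorem; it is quoted as a result of Diaconis and Graham \cite{DG81} and serves only as background for the Main Theorem. So there is no proof in the paper to compare against directly.

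That said, your sketch is sound and follows exactly the skeleton the paper uses for its Main Theorem: the identity \eqref{linearita} (your first display), a Gaussian approximation to the vector of remaining card counts, and the Riemann-sum evaluation of $\int_0^1\sqrt{(1-p)/p}\,dp=\pi/2$ (your $\int_0^1\sqrt{t/(1-t)}\,dt$ is the same integral after $t\mapsto 1-p$, reflecting that you index by cards dealt rather than cards remaining). The substantive difference is in how the Gaussian limit is obtained. You apply a multivariate CLT to the $n$-dimensional hypergeometric vector directly, which is the natural move when $n$ is fixed and the dimension does not grow. The paper, because it targets $n\to\infty$, instead realizes the hypergeometric law as i.i.d.\ binomials conditioned on their sum (Lemma~\ref{conditionalrep}) and then proves the conditioning can be dropped at cost $O(\sqrt m+\ln n)$ via a birthday-problem coupling (Lemma~\ref{randomizing}); that machinery avoids any high-dimensional CLT but would be unnecessary in your fixed-$n$ regime. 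Your acknowledged obstacle---upgrading distributional convergence to convergence of the expected maximum uniformly in $k$---is genuine but routine: a Serfling-type sub-Gaussian bound gives $\mathbb{E}[\max_i Y_k^{(i)}]\le C_n\sigma_k$ uniformly, which both supplies uniform integrability and, since $\sqrt{t/(1-t)}$ is integrable on $[0,1]$, controls the boundary contribution near $t=1$ without further work.
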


One way of interpreting the result is perhaps as follows: the frequency with which each card appears should behave roughly like a normal distribution. Exploiting the fluctuations of $n$ Gaussians and taking the one that deviates the most from its expectation suggests an asymptotic along the lines given by Diaconis \& Graham. Note that this is merely a heuristic: these card counts are not actually independent.
In the opposite regime, fixing the multiplicity $m$ and assuming the number of different types $n$ becomes large, we obtain a very different result.

\begin{thm}[He \& Ottolini \cite{he2021card}]
 For the multiplicity $m$ fixed and the number of different types $n$ going to infinity, 
\begin{equation}\label{largemsmalln}
S_{n,m}=H_mH_n+\sum_{j=1}^{m-1}\frac{1}{j}\ln{m\choose j}+O_m(n^{-1/m}),
\end{equation}
where $H_n = 1 + \dots + 1/n$ denotes the $n$-th harmonic number.
\end{thm}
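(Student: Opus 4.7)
The plan is to derive an exact identity for $S_{n,m}$ in terms of the hitting times of the "phases" of the optimal strategy, and then extract the asymptotics using extreme-value theory for order statistics in the shuffled deck.

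\textbf{Step 1 (exact identity).} Let $T_j$ be the first step at which the minimum occurrence count among the $n$ types equals $j$, so that $T_0 = 0$, $T_m = mn$, and on $k \in [T_j, T_{j+1})$ the minimum count is $j^*(k) = j$. By the Diaconis--Graham optimality, the optimal guesser picks a type with count $j^*(k)$; conditional on the state, this succeeds with probability $(m - j^*(k))/(mn-k)$. Writing $U_j := mn - T_j$ and using $\sum_{k=a}^{b-1} 1/(mn-k) = H_{mn-a} - H_{mn-b}$, telescoping (with $H_{U_0} = H_{mn}$ and $H_{U_m} = 0$) yields
$$ S_{n,m} \;=\; \sum_{j=0}^{m-1}(m-j)\, E\!\left[H_{U_j} - H_{U_{j+1}}\right] \;=\; m\, H_{mn} \;-\; \sum_{j=1}^{m-1} E[H_{U_j}]. $$

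\textbf{Step 2 (extreme-value limit of $U_j$).} Write $T_j = \max_{1\le i\le n} B_i^{(j)}$, where $B_i^{(j)}$ is the position in the shuffled deck of the $j$-th card of type $i$, equivalently the $j$-th order statistic of $m$ positions drawn without replacement from $\{1,\ldots,mn\}$. Approximating by $m$ i.i.d.\ uniforms on $[0,1]$ (an $O(1/n)$ effect) and treating the $n$ variables $B_i^{(j)}$ across $i$ as nearly independent, one computes, for small $t$,
$$ P\!\left(B_i^{(j)} > mn(1-t)\right) \;=\; \binom{m}{j-1}\, t^{\,m-j+1}\bigl(1 + O(t)\bigr), $$
whence the extreme-value limit
$$ \frac{U_j}{n^{(m-j)/(m-j+1)}} \;\xrightarrow{\;d\;}\; m\, s_j, \qquad P(s_j > x) \;=\; \exp\!\left(-\binom{m}{j-1}\, x^{\,m-j+1}\right). $$
A direct moment computation for the Weibull variable $s_j$ (substitute $y = \binom{m}{j-1} x^{m-j+1}$ and use $\int_0^\infty \ln y\, e^{-y}\, dy = -\gamma$) gives
$$ E[\ln s_j] \;=\; -\,\frac{\gamma + \ln\binom{m}{j-1}}{m-j+1}. $$

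\textbf{Step 3 (assembly).} Using $H_N = \ln N + \gamma + O(1/N)$ together with Step 2,
$$ E[H_{U_j}] \;=\; \frac{m-j}{m-j+1}\ln n \;+\; \ln m \;+\; \gamma \;-\; \frac{\gamma + \ln\binom{m}{j-1}}{m-j+1} \;+\; o(1). $$
The algebraic identity $\sum_{j=1}^{m-1}\frac{m-j}{m-j+1} = m - H_m$ produces the coefficient $H_m$ in front of $\ln n$; the Euler--Mascheroni constants aggregate to $H_m\,\gamma$; and, using $\binom{m}{j-1} = \binom{m}{m-j+1}$ together with the reindexing $i = m - j + 1$, the remaining logarithms collapse to $\sum_{i=1}^{m-1}\frac{1}{i}\ln\binom{m}{i}$. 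Rewriting $H_m \ln n + H_m \gamma$ as $H_m H_n + O(1/n)$ then delivers exactly the claimed main term.

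\textbf{Step 4 (error control -- the hard part).} Upgrading the $o(1)$ above to the sharp $O_m(n^{-1/m})$ is the main technical effort. A quantitative version of Step 2 (carefully keeping track of both the hypergeometric-to-binomial correction at the tail and the finite-$n$ rate in the Weibull limit) shows that the CDF of $U_j / n^{(m-j)/(m-j+1)}$ converges to its Weibull limit at rate $O(n^{-1/(m-j+1)})$, and that this propagates via integration against $\ln(\cdot)$ to a matching error in $E[H_{U_j}]$. Summed over $j=1,\ldots,m-1$, the dominant contribution comes from $j=1$ (the largest scale $n^{(m-1)/m}$), producing the claimed $O(n^{-1/m})$; the higher phases $j\ge 2$ live on strictly smaller scales and contribute strictly smaller errors.
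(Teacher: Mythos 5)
This statement is a cited background result (Theorem of He and Ottolini, \cite{he2021card}); the present paper does not prove it, so there is no in-paper proof to compare against. Judged on its own merits, your outline follows the natural route and is, in structure, very close to how the result is actually established: reduce $S_{n,m}$ to harmonic numbers of the return-to-origin times $U_j$ via the optimal strategy, then extract asymptotics from extreme-value behaviour of the order statistics of card positions. Your Step 1 is an exact identity and is correct (the telescoping gives $S_{n,m}=mH_{mn}-\sum_{j=1}^{m-1}\mathbb{E}[H_{U_j}]$; one can also verify it directly by writing $j^*(mn-t)=\sum_{j\ge 1}\mathbf{1}\{U_j\ge t\}$ and swapping sums). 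Steps 2 and 3 produce the correct leading constant, including the pleasant collapse of the Euler constants into $H_mH_n$ and the reindexing $\binom{m}{j-1}=\binom{m}{m-j+1}$.

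Two points need strengthening before this is a proof. First, in Step 2 you explicitly assume the $n$ variables $B_i^{(j)}$ are \emph{nearly independent}, but they are the order statistics of $n$ disjoint $m$-subsets of the same permutation of $\{1,\dots,mn\}$ and are therefore exchangeable but negatively dependent; the max of $n$ such variables requires a genuine argument (e.g.\ a second-moment/Bonferroni computation directly on $P(T_j\le k)$, or Poissonization), not just the heuristic \virgolette{$O(1/n)$ effect}. Second, and relatedly, passing from the distributional Weibull limit of $U_j/n^{(m-j)/(m-j+1)}$ to convergence of $\mathbb{E}[\ln U_j]$ requires uniform integrability of $\ln s_j$ both at $0$ (where $H_{U_j}$ is bounded by $H_{mn}$, so this is fine) and in the upper tail, which needs a separate tail bound; your Step 4 gestures at this but does not supply it. The claim that the dominant $O_m(n^{-1/m})$ error comes from the $j=1$ scale is the right guess, but turning the $o(1)$ of Step 3 into that rate is exactly the quantitative extreme-value estimate that carries the real work, and as written it is a plan rather than an argument.
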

The leading order term is $H_m H_n \sim \ln{n} \ln{m}$ which, for $m$ fixed, is logarithmic growth in $n$. The second term in the expansion only depends on $m$ and is thus a constant. Empirically, the result is accurate even for small values of $m,n$ (see \cite{he2021card}).\\

A natural remaining question is what happens when both $m,n \rightarrow \infty$ with the original Zener setup $m=n$ being perhaps particularly interesting. Our main result covers a wide range of these parameters and is applicable as long as the number of different cards  $n$ is slightly smaller than exponential in the number of different types $m$. Such a restriction is necessary: when $n$ becomes disproportionately large compared to $m$, the result of He \& Ottolini \cite{he2021card} shows the behavior to be different.

\begin{thm}[Main Result] \label{mainthm} Let $c, \varepsilon > 0$. If $m,n \rightarrow \infty$ while $(\ln{n})^{3+\varepsilon} \leq c \cdot m$, then
\begin{equation}\label{mnwhatever}
S_{n,m}=m+\frac{\pi}{\sqrt{2}}\sqrt{m\ln n}+o_{c,\varepsilon}(\sqrt{m\ln n}).
\end{equation}
\end{thm}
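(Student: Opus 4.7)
The plan is to start from the basic decomposition
\begin{equation}
S_{n,m}=\sum_{k=0}^{mn-1}\frac{\mathbb{E}[M_k]}{mn-k},\qquad M_k:=\max_{1\leq i\leq n}X_i^{(k)},
\end{equation}
where $X_i^{(k)}$ is the number of cards of type $i$ still in the deck after $k$ cards have been revealed. Since the deck is a uniformly random permutation (and the guessing strategy affects the guess but not the card that appears), the vector $(X_1^{(k)},\ldots,X_n^{(k)})$ is multivariate hypergeometric with fixed sum $mn-k$; writing $M_k=(mn-k)/n+\Delta_k$ with $\Delta_k\geq 0$ gives the identity
\begin{equation}
S_{n,m}=m+\sum_{k=0}^{mn-1}\frac{\mathbb{E}[\Delta_k]}{mn-k},
\end{equation}
so the problem reduces to understanding how much the maximum of the multivariate hypergeometric exceeds its mean at each time step.

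Next I would establish the asymptotics $\mathbb{E}[\Delta_k]\sim \sigma_k\sqrt{2\ln n}$ with $\sigma_k^2=mt(1-t)$, $t:=k/(mn)$, uniformly for $t$ in compact subsets of $(0,1)$. The one-dimensional marginal $X_i^{(k)}$ is hypergeometric with variance $mt(1-t)(1+O(1/n))$, and a local central limit theorem with an Edgeworth correction shows that $(X_i^{(k)}-\mu_k)/\sigma_k$ is approximately standard normal on the interval $[-x,x]$ with error uniform in $x$ as long as $x^3/\sqrt{m}\to 0$. This is precisely the regime $x=O(\sqrt{\ln n})$ relevant for the maximum of $n$ such variables, and the hypothesis $(\ln n)^{3+\varepsilon}\leq cm$ enters here to justify the Gaussian approximation deep into the tail. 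For the maximum itself, the upper bound $\mathbb{E}[\Delta_k]\leq \sigma_k\sqrt{2\ln n}(1+o(1))$ follows from the negative association of the multivariate hypergeometric (Joag-Dev and Proschan), which gives $\mathbb{P}(M_k\leq u)\geq\prod_i\mathbb{P}(X_i^{(k)}\leq u)$ and hence stochastic comparison with $n$ independent copies. For the matching lower bound I would either condition on a slowly growing number of coordinates so that the remaining ones become asymptotically independent, or couple with a Poissonized (independent Poisson piles) or Gaussianized version of matching parameters and transfer the extreme-value asymptotics back.

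Once these ingredients are in place, substituting and recognizing a Riemann sum yields
\begin{equation}
\sum_{k=0}^{mn-1}\frac{\mathbb{E}[\Delta_k]}{mn-k}\sim \sqrt{2m\ln n}\,\int_0^1\sqrt{\frac{t}{1-t}}\,dt=\frac{\pi}{\sqrt{2}}\sqrt{m\ln n},
\end{equation}
using the standard evaluation $\int_0^1\sqrt{t/(1-t)}\,dt=\pi/2$. The boundary regions need a separate argument: for small $k$ one has $\Delta_k=0$ or only a crude $O(\sqrt{m\ln n})$ bound on a vanishing fraction of steps, while near $k=mn$ the trivial estimate $\Delta_k\leq mn-k$ against the weight $1/(mn-k)$ contributes only $O(\delta\cdot mn)$ over a window of size $\delta\cdot mn$ after division, which is $o(\sqrt{m\ln n})$ once $\delta\to 0$ is chosen slowly enough.

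The main obstacle I foresee is exactly the tail control of the Gaussian approximation for the hypergeometric at deviations of order $\sqrt{\ln n}$ standard deviations, where pointwise Berry--Esseen is insufficient and one needs a local central limit theorem with an Edgeworth correction; the cubic correction term is what forces the quantitative threshold $(\ln n)^3\ll m$ appearing in the hypothesis. The second delicate point is the lower bound on the maximum, since negative association provides only one direction; a careful coupling with an independent or Poissonized system (or a direct second-moment computation on the number of coordinates exceeding the candidate threshold) seems necessary to show that the correlations built into the multivariate hypergeometric do not significantly depress the maximum.
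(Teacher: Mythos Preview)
Your overall architecture---the decomposition $S_{n,m}=m+\sum_k \mathbb{E}[\Delta_k]/(mn-k)$, a Gaussian tail approximation for the marginals valid out to $\sqrt{2\ln n}$ standard deviations (this is exactly where $(\ln n)^3\ll m$ enters), and the Riemann sum $\int_0^1\sqrt{t/(1-t)}\,dt=\pi/2$---matches the paper. The paper, however, handles the dependence differently: instead of negative association for the upper bound and a coupling for the lower bound, it uses the exact conditional representation of the multivariate hypergeometric as i.i.d.\ binomials conditioned on their sum, and then shows (via a birthday--problem Markov chain controlling how often the running maximum can increase) that \emph{removing} the conditioning changes $S_{n,m}$ by only $O(\sqrt m+\ln n)$. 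This buys both directions at once and reduces everything to i.i.d.\ $\mathrm{Bin}(m,p)$, for which Feller's sharp tail asymptotics apply directly; your route is plausible but the lower bound remains only sketched.

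There is a genuine gap in your boundary treatment near $k=mn$ (few cards remaining). You invoke the trivial bound $\Delta_k\le mn-k$, so each term is at most $1$, and claim the window of size $\delta\cdot mn$ contributes $O(\delta\cdot mn)=o(\sqrt{m\ln n})$ for $\delta\to 0$ ``slowly enough''. In fact this forces $\delta\ll \sqrt{\ln n}/(n\sqrt m)$, which is extremely small; but then your main region must extend down to $1-t=\delta$, where the Gaussian approximation at level $\sqrt{2\ln n}$ requires $m(1-t)\gg (\ln n)^3$, i.e.\ $m\delta\gg(\ln n)^3$. These two constraints on $\delta$ are incompatible unless $m\gg n^2(\ln n)^5$, which the hypothesis does not provide. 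The logarithmic singularity of the weight $1/(mn-k)$ near the end of the deck is real and cannot be absorbed by a trivial bound: the paper deals with it via a Chernoff argument with a parameter $\theta=\theta(p)$ chosen adaptively as $p\to 0$, obtaining $\mathbb{E}[\Delta_k]\lesssim \sqrt{m\ln n}\,(\ln(1/p))^{-1-\varepsilon}$, so that the resulting sum $\sum_{t\le s\,mn} t^{-1}(\ln(mn/t))^{-1-\varepsilon}$ is convergent. You will need an argument of this strength, not the trivial bound, for that region.
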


The result covers a wide range of parameters. It also suggests that there is a phase transition in the regime where there are a great many different types of cards each of which only appearing a relatively small number of times. In that regime, we expect a switch from \ref{mnwhatever} to \ref{largemsmalln}. The nature of this transition is currently not understood and appears to be an interesting problem: the proof of our main results suggests that this phasse transition may perhaps occur around $\ln{n} \sim m$. Of course, many other problems (variance or the existence of a central limit theorem) remain.
There is a heuristic that motivates our main result. We use $X_i(t)\in \{0,1,\ldots , m\}$ to denote the numbers of cards of type $1\leq i\leq n$ that are left in the deck when there are $1\leq t\leq nm$ cards left in total. Linearity of expectation and the description of the optimal strategy imply that
\begin{equation}\label{linearita}
S_{n,m}=\sum_{t=1}^{nm}\frac{\mathbb E[\max_i X_i(t)]}{t}.
\end{equation}
We rescale $p = t/nm$ (thus $0 \leq p \leq 1$). 
The $X_i(t)$ should approximately obey a normal distribution and we could moreover assume that they are independent. This is certainly false because $X_1(t) + \dots +X_n(t) = t$ but it would simplify the problem. Pretending that the $X_i(t)$'s are independent normal random variables with the correct mean $mp$ and variance $mp(1-p)$, one would obtain 
\begin{align*}
\mathbb E[\max_i X_i(t)]\approx m p+\sqrt{2mp(1-p)\ln n}.
\end{align*}
Plugging in, we obtain (after substituting $p = t/nm$)
\begin{align*}
   S_{n,m}=\sum_{t=1}^{nm}\frac{\mathbb E[\max_i X_i(t)]}{t} &\approx m + \sum_{t=1}^{mn} \frac{\sqrt{2mp(1-p)\ln n}}{t} \\
   &= m + \sqrt{2 m \ln{n}}\sum_{t=1}^{mn} \frac{\sqrt{p(1-p)}}{t} \\
   &\approx  m + \sqrt{2 m \ln{n}}\int_0^1\sqrt{\frac{1-p}{p}}dp
\end{align*}
and the integral evaluates to $\pi/2$. 
\section{Proof}
\subsection{Outline} Our proof will be motivated by the heuristic sketched above. To justify the heuristic, we will exploit a well-known conditional representation of the $X_i(t)$'s in terms of conditionally independent binomial random variables $Y_i(t)$ given their sum. The intuition is that the maximum should only be mildly affected by the conditioning on the sum, which is indeed the case. One has to be careful in the case of of small and large $t$ (having selected almost none or almost all of the cards) where approximations degenerate. We will split the argument into two parts. In Section \ref{sec2}, we show how to reduce the problem to the case where the $X_i$s are independent binomials by means of a conditional representation. In Section \ref{sec3}, we prove the result in the case of independent binomials by exploiting sharp bounds on binomials tails. These two ingredients then establish the result. Section \ref{prooof} contains the proof of the main result.

\subsection{Reduction to independence}\label{sec2} We start by explaining how to reduce the problem to that of independent random variables by means of a useful conditional representation. The use of conditional limit theory to deal with order statistics of discrete processes dates back to \cite{Levin1981}, where the author exploits a conditional representation of the multinomial distribution in terms of independent Poisson conditioned on their sum.\\ 

Recall that, for each $1 \leq i \leq n$ and each $1\leq t\leq mn$, the random variable $X_i(t)$ counts the number of cards of type $i$ that are in the remaining $t$ cards. 
Fixing a value of $t$ their joint distribution is given by a multivariate hypergeometric distribution
\begin{equation}
\mathbb P(X_1(t)=j_1,\ldots, X_n(t)=j_n)=\frac{\prod_{i=1}^n {m \choose j_i}}{{nm \choose t}}, \quad j_1+\ldots+j_n=t, \quad 0\leq j_i\leq m
\end{equation}
Note that, if it were not for the constraint of having a total of $t$ cards remaining
$$ \sum_{i=1}^{n} X_i(t) = \sum_{i=1}^{n} j_i=t,$$
the $X_i$ would be independent.
To overcome this issue, we consider $n$ independent and identically distributed random variables $Y_1(t), \ldots, Y_n(t)$ each of which follow an independent binomial distribution
$$Y_i\sim \mbox{Bin}(m,p) \quad \mbox{with}~ p=\frac{t}{mn}.$$
We also introduce their sum
$$\tilde Y(t)=\sum_{i=1}^{n} Y_i(t)$$
 and note that $\tilde Y(t)\sim \mbox{Bin}(mn,p)$.
 We will use the fact that for having $t$ fixed, the distribution of cards can be realized via independent and identical random variables following a binomial distribution and conditioned on having the correct sum (in particular, the binomial random variables are also not yet independent). This well-known characterization of the hypergeometric distribution (see, e.g., \cite{Skibinsky1970}) has a simple proof that we report here for the sake of completeness.\\  
\begin{lemma}\label{conditionalrep}
For any $n,m$ and $1\leq t\leq mn$ fixed, we have
\begin{align*}
\mathcal L(X_1(t),\ldots, X_n(t))=\mathcal L(Y_1(t)\ldots, Y_n(t)|\tilde Y(t)=t),
\end{align*}
where $\mathcal{L}$ denotes the law of the random variable.
\end{lemma}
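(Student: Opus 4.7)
My plan is to compute the conditional distribution of $(Y_1, \ldots, Y_n)$ given $\tilde Y(t) = t$ directly and check that the resulting pmf coincides with the multivariate hypergeometric pmf displayed just before the lemma. This is a pure pmf identity, so no limiting arguments or probabilistic approximations are required.

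First, I would fix a tuple $(j_1, \ldots, j_n)$ with $0 \le j_i \le m$ and $\sum_i j_i = t$. Using independence of the $Y_i(t)$ and collecting the $p$-powers via $\sum_i j_i = t$, the joint probability factors cleanly as
$$\mathbb P(Y_1(t) = j_1, \ldots, Y_n(t) = j_n) = \prod_{i=1}^n \binom{m}{j_i} p^{j_i}(1-p)^{m-j_i} = p^t (1-p)^{mn-t} \prod_{i=1}^n \binom{m}{j_i}.$$
I would then write down the normalizer, which since $\tilde Y(t) \sim \mbox{Bin}(mn,p)$ is simply
$$\mathbb P(\tilde Y(t) = t) = \binom{mn}{t}\, p^t (1-p)^{mn-t}.$$

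Dividing, the $p$-dependent factors cancel exactly and the quotient reduces to $\prod_i \binom{m}{j_i} / \binom{mn}{t}$, which is precisely the multivariate hypergeometric pmf of $(X_1(t), \ldots, X_n(t))$ given above the lemma statement. Matching the two pmfs on all admissible tuples gives the claimed equality of laws.

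As a closing observation, I would point out that the calculation is valid for \emph{any} $p \in (0,1)$: the parameter $p$ does not appear in the lemma and can in principle be chosen freely. The specific choice $p = t/(mn)$ is made only so that $t$ sits at the mean of $\tilde Y(t)$, which will make removing the conditioning in Section \ref{sec3} quantitatively cheap, since $\mathbb P(\tilde Y(t) = t)$ is then of order $1/\sqrt{mnp(1-p)}$ by a local central limit estimate. There is no real obstacle in this lemma — it is a short, self-contained identity — but it is the structural device that lets the subsequent argument replace the hypergeometric card counts by conditionally independent binomials.
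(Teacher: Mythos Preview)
Your proposal is correct and follows essentially the same route as the paper: fix an admissible tuple, use independence of the $Y_i$ to factor the joint pmf and collect the $p$-powers via $\sum_i j_i = t$, divide by $\mathbb P(\tilde Y(t)=t)=\binom{mn}{t}p^t(1-p)^{mn-t}$, and observe that the $p$-dependent factors cancel to leave the multivariate hypergeometric pmf. Your closing remark that the identity holds for any $p\in(0,1)$ is a pleasant addition not stated explicitly in the paper.
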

\begin{proof}
Consider any $n$-tuple $0\leq j_i\leq m$ with $\sum_{i=1}^n j_i=t$, and let $p=t/mn$. By definition of conditional expectation
\begin{align*}
\mathbb{P}(Y_1(t)=j_1,\ldots, Y_n(t)=j_n|\tilde Y(t)=t)&=\frac{\mathbb P(Y_1(t)=j_1,\ldots, Y_n(t)=j_n)}{\mathbb P(\tilde Y(t)=t)},
\end{align*}
where the condition $\tilde Y(t) = t$ can be omitted because $\sum_{i=1}^n j_i=t$ by design. We can now use the independence of the $Y_i$ to compute
\begin{align*}
   \mathbb P(Y_1(t)=j_1,\ldots, Y_n(t)=j_n) &= \prod_{i=1}^{n} {m \choose j_i}p^{j_i}(1-p)^{m-j_i} \\
   &= p^{\sum_{i=1}^{n} j_i} (1-p)^{mn - \sum_{i=1}^{n} j_i }\prod_{i=1}^{n} {m \choose j_i} \\
   &= p^t (1-p)^{mn - t} \prod_{i=1}^{n} {m \choose j_i}.
\end{align*}
Simultaneously, since $\tilde Y(t)\sim \mbox{Bin}(mn,p)$, we have
$$ \mathbb P(\tilde Y(t)=t) = \binom{mn}{t} p^t (1-p)^{mn-t}$$
from which we deduce the desired statement.
\begin{align*}
\mathbb{P}(Y_1(t)=j_1,\ldots, Y_n(t)=j_n|\tilde Y(t)=t)=\frac{\prod_{i=1}^n {m \choose j_i}}{{nm \choose t}}.
\end{align*}
\end{proof}
We define $\tilde S_{n,m}$ to be the analogue of \eqref{linearita} where we replace the hypergeometric random variables with \textit{independent} binomial random variables, i.e.
\begin{equation} \label{eq:better}
\tilde S_{n,m}=\sum_{t=1}^{mn}\frac{\mathbb E[\max_i Y_i(t)]}{t}.
\end{equation}
Note that, according to Lemma 1, if we condition the binomial random variables on having the correct sum $t$, we recover the hypergeometric distribution exactly: the purpose of the next Lemma is to show that omitting this conditioning leads to a small error. This will the conclude the first part of the proof, the remainder of which is then dedicated to the study \ref{eq:better}.
\begin{lemma}\label{randomizing}
We have, for some universal $C>0$,
\begin{align*}
|S_{n,m}-\tilde S_{n,m}| \leq C(\sqrt m+\ln n).
\end{align*}
\end{lemma}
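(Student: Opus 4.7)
The plan is to exploit Lemma~\ref{conditionalrep} directly. Setting $f(t) := \mathbb E[\max_i X_i(t)]$ and $g(t) := \mathbb E[\max_i Y_i(t)]$, the identity in Lemma~\ref{conditionalrep} extends (since the conditional law of $(Y_i(t))$ given $\tilde Y(t)=s$ is independent of $p=t/mn$ and coincides with that of $(X_i(s))$) to
\[
g(t) \;=\; \sum_{s=0}^{mn} \mathbb P(\tilde Y(t) = s)\,f(s), \qquad
g(t) - f(t) \;=\; \sum_{s=0}^{mn} \mathbb P(\tilde Y(t) = s)\bigl(f(s) - f(t)\bigr).
\]
Since $\tilde Y(t) \sim \mbox{Bin}(mn, t/mn)$ has variance $\sigma_t^2 := t(1-t/mn)$, a naive $1$-Lipschitz estimate $|f(s)-f(t)| \leq |s-t|$ would only give $|g(t)-f(t)| \leq \sigma_t$, and summing $\sigma_t/t$ yields the useless bound $O(\sqrt{mn})$. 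The whole point is to replace the trivial Lipschitz bound by a sharper one.

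The crux of my argument is to prove that $s \mapsto f(s)/s$ is non-increasing on $\{1,\ldots,mn\}$. I would obtain this via a short coupling: realize a uniform size-$s$ subset of the deck by first drawing a uniform size-$(s+1)$ subset and then removing one of its cards uniformly at random. Writing $M_s, M_{s+1}$ for the corresponding maxima, one has $M_{s+1}-M_s \in \{0,1\}$, and a quick case analysis shows that the value is $1$ if and only if the removed card is of the \textbf{unique} maximum type at size $s+1$. Conditioning on the size-$(s+1)$ subset, this event has probability $(M_{s+1}/(s+1))\mathbf{1}_{\{\text{unique max}\}}$, so taking expectations yields $f(s+1) - f(s) \leq f(s+1)/(s+1)$, which rearranges exactly to $f(s+1)/(s+1) \leq f(s)/s$. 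In particular, for every $1 \leq s, t \leq mn$,
\[
|f(s) - f(t)| \;\leq\; \frac{f(t)}{t}\,|s - t|.
\]

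Feeding this back into the displayed identity and applying Cauchy--Schwarz gives
\[
|g(t)-f(t)| \;\leq\; \frac{f(t)}{t}\,\mathbb E|\tilde Y(t)-t| \;\leq\; \frac{f(t)}{t}\,\sigma_t.
\]
Now I would use the trivial bound $f(t) \leq \min(t,m)$ and split $\sum_{t=1}^{mn}|g(t)-f(t)|/t$ at $t = m$. For $t \leq m$ one has $f(t) \leq t$, so the summand is at most $\sqrt{(1-t/mn)/t} \leq 1/\sqrt t$, contributing $O(\sqrt m)$. For $t > m$ one has $f(t) \leq m$, so the summand is at most $m\sqrt{(1-t/mn)}\,t^{-3/2} \leq m\,t^{-3/2}$, whose sum is also $O(\sqrt m)$ (by comparison with $\int_m^\infty t^{-3/2}\,dt = 2/\sqrt m$). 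This already gives the stronger estimate $|S_{n,m} - \tilde S_{n,m}| = O(\sqrt m)$, comfortably implying the claimed $C(\sqrt m + \ln n)$.

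The main obstacle is the monotonicity of $f(s)/s$: the case split around the uniqueness of the maximum in the coupling is completely elementary, but has to be set up carefully (in particular the subcase of a tied maximum, where the removal does not decrease $M_{s+1}$ at all, is what makes the inequality non-trivial). Everything else is a one-line Cauchy--Schwarz and a split-and-sum exercise.
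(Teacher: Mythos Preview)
Your argument is correct and in fact yields the sharper bound $|S_{n,m}-\tilde S_{n,m}|=O(\sqrt m)$, which comfortably implies the statement.

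Your route is genuinely different from the paper's. The paper bounds $f(t)-f(s)$ by analyzing how often $\max_i X_i(\cdot)$ can increase as cards are added one at a time: it dominates this count by the number of returns to state $1$ of a birthday-type Markov chain on $\{1,\dots,n\}$ and invokes a renewal estimate to get
\[
\mathbb E[\max_i X_i(t)]-\mathbb E[\max_i X_i(s)]=O\!\left(\frac{t-s}{\sqrt n}+1\right),
\]
after which the same Cauchy--Schwarz on $\tilde Y(t)$ produces $O(\sqrt m)$ from the first term and an additional $O(\ln(mn))=O(\sqrt m+\ln n)$ from the ``$+1$''. Your approach bypasses the Markov chain and renewal theory entirely: the coupling (delete a uniform card from a size-$(s{+}1)$ sample) gives the clean monotonicity $f(s)/s\ge f(s+1)/(s+1)$, which immediately converts to the multiplicative Lipschitz bound $|f(s)-f(t)|\le (f(t)/t)\,|s-t|$. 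Combined with the trivial $f(t)\le\min(t,m)$ this is exactly what is needed to make the split at $t=m$ work, and the extraneous $\ln n$ term never appears. The paper's argument does give a pointwise Lipschitz constant of order $1/\sqrt n$ for $f$ (potentially useful information in its own right), but for the purpose of this lemma your proof is both shorter and stronger.
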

\begin{proof}
Owing to Lemma \eqref{conditionalrep}, we can replace the independent binomial random variables $Y_i$ by hypergeometric random variables $X_i$ provided that we condition on their sum $t$: this allows us to write
\begin{align*}
S_{n,m}-\tilde S_{n,m}&=\sum_{t=1}^m \frac{\mathbb E[\max_i X_i(t)]-\mathbb E[\max_i Y_i(t)]}{t}\\&=\sum_{t=1}^{nm}\frac{1}{t}\sum_{s=1}^{nm}\left(\mathbb E[\max_i X_i(t)]-\mathbb E[\max_i X_i(s)]\right)\mathbb P(\tilde Y(t)=s).
\end{align*}
One would of course now expect that $P(\tilde Y(t)=s)$ is tightly concentrated around its expectation: it thus suffices to understand how quickly the maximum can change for $|t-s|$ relatively small (without loss of generality, we assume from now on $s<t$).
We therefore have to understand the likelihood $\mathbb P\left(\max_i X_i(t+1)>\max_i X_i(t)\right)$. The maximum can only increase if a card is picked that is already maximal before. This suggests on conditioning on the number of types that have appeared a maximal number of times and we note that
\begin{equation}\label{excursion}
\mathbb P\left(\max_i X_i(t+1)>\max_i X_i(t)\Big||\ell: X_{\ell}(t)=\max_j X_j(t)=k|=j\right)=\frac{(m-k)j}{nm-t}\leq \frac{j}{n},
\end{equation}
where we used that
$$ t = \sum_{i=1}^{n} X_i(t) \leq n \max_{1 \leq i \leq n} X_i(t)$$
implying
$$ k = \max_{1\leq i \leq n} X_i(t) \geq\frac{t}{n}.$$
The only case in which \eqref{excursion} is saturated is the configuration in which $j$ cards appear with multiplicity $k$, and all other cards appear with multiplicity $k-1$.  In this case the maximum increases with probability precisely $j/n$.
One way of seeing this is as follows: if the other cards had only appeared rarely up to that point,  we would be more likely to pick one of these cards since there are still more of them in the pile. The most likely transition to a new maximum happens if the chance of picking a card that is already chosen a maximal number of times is greatest. \\

We will now present an argument which, implicitly, works under the assumption that we are constantly in the worst case setting described above (in a suitable sense). We  introduce a Markov chain whose role is to keep track of the number of different types of cards whose current occurence is given by maximal multiplicity $\max_{i} X_i(t)$. Note that, in particular, if the maximum increases then there exists exactly one card which arises with maximal multiplicity and the counter drops back to 1. The Markov chain will be
operating on the state space $\{1,\ldots, n\}$: it is possible to move from each point $j$ to either $j+1$ or back to $1$. The corresponding transition probabilities $q_{i,j}$ are given by
\begin{align*}
q_{j, j+1}=\frac{n-j}{n} \quad \mbox{and} \quad  q_{j,1}=\frac{j}{n}  \quad \mbox{for} \qquad 1\leq j\leq n.
\end{align*}
The further the Markov chain is from 1, the more likely it is to return to 1 (corresponding to uncovering a new maximum). We also observe that the Markov chain is more likely to return to 1 than we are to uncover a new maximum (because the card deck will not always be in the worst case scenario assumed above): more formally, for all $1\leq k\leq m$, $1\leq t\leq mn$ and $1\leq j\leq j'\leq n$
\begin{align*}
\mathbb P\left(\max_i X_i(t+1)>\max_i X_i(t)\Big||\ell: X_{\ell}(t)=\max_j X_j(t)=k|=j\right)\leq q_{j,1}\leq q_{j',1}, \quad 
\end{align*}
The number of times at which $\max X_i(t)$ changes are bounded above by the number $N_{t-s}$ of excursions away from $1$ of the Markov chain.
 In particular,  
\begin{align*}
\mathbb E[\max_i X_i(t)]-\mathbb E[\max_i X_i(s)]\leq \mathbb E[N_{t-s}].
\end{align*}
It thus remains to understand how often the Markov chain is going to hit the state 1.
Let now $T$ be the time to return back to the origin for the Markov chain $Z$. Renewal theory suggest that the latter should be approximately $(t-s)\mathbb E[T]$ for $t-s$ large. This is made precise in \cite{renewal}, whose result gives the estimate
\begin{align*}
\mathbb E[N_{t-s}]\leq \frac{t-s}{\mathbb E[T]}+O\left(\frac{\mathbb E[T^2]}{(\mathbb E[T])^2}\right).
\end{align*}
Note that $T$ is nothing but the expected time to observe a birthday coincidence in the classical birthday problem. In particular,
$$\mathbb P(T\geq s)=\left(1-\frac{s}{n}\right)\ldots \left(1-\frac{1}{n}\right)$$
and using the estimate
\begin{align*}
\exp\left(-\frac{s^2}{2n}+O\left(\frac{s^3}{n^2}\right)\right)\leq \left(1-\frac{s}{n}\right)\ldots \left(1-\frac{1}{n}\right)\leq \exp\left(-\frac{s^2}{2n}\right),
\end{align*}
we obtain the well-known results
$\mathbb E[T]=\Omega(\sqrt n)$ and $\mathbb E[T^2]=O(n)$
and thus
\begin{align*}
\mathbb E[\max_i X_i(t)]-\mathbb E[\max_i X_i(s)]=O\left(\frac{t-s}{\sqrt n}+1\right).
\end{align*}
This implies
\begin{align*}
|S_{n,m}-\tilde S_{n,m}|&=O\left(\frac{1}{\sqrt n}\sum_{t=1}^{nm}\frac{1}{t}\sum_{s=1}^{mn}\mathbb |t-s|P(\tilde Y(t)=s)+\sum_{t=1}^{nm}\frac{1}{t}\right).
\end{align*}
The second sum can be bounded by $\ln{(mn)}$. As for the first sum, we first note that by Cauchy-Schwarz for any random variable
$$ \mathbb{E} \left| X - \mathbb{E}X \right| \leq \sqrt{\mathbb{V} X}.$$
We also observe that $\tilde Y(t)\sim \mbox{Bin}(nm,p)$ with $p=t/nm$ has standard deviation $\sqrt{mnp(1-p)}$
and thus
$$ \sum_{s=1}^{mn}\mathbb |t-s|P(\tilde Y(t)=s) \leq \sqrt{m n p (1-p)}.$$
This simplifies the first sum and leads to the desired bound since
\begin{align*}
    \frac{1}{\sqrt n}\sum_{t=1}^{nm}\frac{1}{t}\sum_{s=1}^{mn}\mathbb |t-s|P(\tilde Y(t)=s) &\leq     \frac{1}{\sqrt n}\sum_{t=1}^{nm}\frac{1}{t} \sqrt{mn p (1-p)} \\
    &=\sqrt m\sum_{t=1}^{nm}\frac{1}{t} \sqrt{\frac{t}{mn} \left(1 - \frac{t}{mn}\right)} \\
    &\leq c\sqrt{m} \int_0^1 \frac{1}{x}\sqrt{x(1-x)} dx \leq C \sqrt{m}.
\end{align*}
\end{proof}

\subsection{The independent case}\label{sec3} It now suffices to analyze
$$\tilde S_{n,m}=\sum_{t=1}^{nm}\frac{\mathbb E[\max_i Y_i(t)]}{t}$$
where the $Y_i$ are \textit{independent} binomial random variables and
$$Y_i\sim \mbox{Bin}(m,p) \quad \mbox{with}~ p=\frac{t}{mn}.$$
We start by rescaling these random variables by shifting them to have mean 0: we let $\overline Y_i(t)=Y_i(t)-t/n$ which reduces our problem to the study of
\begin{equation}\label{tails}
\tilde S_{n,m}=m+\sum_{t=1}^{nm}\frac{\mathbb E[\max_i \overline Y_i(t)]}{t}.
\end{equation}
These binomial random variables are well approximated by a normal distributions in regions where their variance is not too small: this naturally suggests splitting the problem into different regions. We write, for some $ 0 < s=s_{n,m} \ll 1$ to be determined later (which will ultimately tend to 0 at a suitable rate), 
\begin{align*}
\tilde S_{n,m} =m &+ \sum_{t=snm}^{(1-s)nm}\frac{\mathbb E[\max_i \overline Y_i(t)]}{t} \\
&+\sum_{t=1}^{s nm}\frac{\mathbb E[\max_i \overline Y_i(t)]}{t} +
\sum_{t= (1-s) nm}^{nm}\frac{\mathbb E[\max_i \overline Y_i(t)]}{t}.
\end{align*}
We  treat the first sum by comparing with a normal distribution. The two remaining sums are tail events that will be treated separately. We start with the tail events.\\

\textbf{The tail sums.}
In order to deal with the tails, we just need an upper bound of the right order, but we can afford to lose a factor of $\sqrt{1-p}$. This follows from a Chernoff type argument: for all $\theta>0$, by linearity of expectation,
\begin{align*}
 \exp{\{\theta \mathbb ~\mathbb{E}[\max_i \overline Y_i(t)]\}} &\leq  \sum_{i=1}^{n} \exp{\{\theta~ \mathbb E[ \overline Y_i(t)]\}} \\
&=
n\exp{\{\theta~ \mathbb{E}[\overline Y_1(t)]\}}\leq n(1-p+pe^{\theta})^me^{-\theta m}, 
\end{align*}
so that taking logarithm of both sides and using $\ln(1+x)\leq x$ we obtain
\begin{align*}
\mathbb E[\max_i \overline Y_i(t)]\leq \frac{\ln n}{\theta}+\frac{mp}{\theta}(e^{\theta}-1)-m.
\end{align*}
We start with the case where $p$ is close to 0, which is the hardest to deal with since it is the time where the feedback is the most relevant. This allows for many extra correct guesses owing to the detailed knowledge of the composition of the deck -- captured by the logarithmic singularity close to $t=0$ in \eqref{tails}. However, we deal with that considering the choice
\begin{align*}
\theta=\sqrt{\frac{2\ln n}{m}}\left(\ln\frac{1}{p}\right)^{1+\varepsilon}
\end{align*}
Since $p = t/(mn) \geq 1/(mn)$ we deduce
$$\ln \frac{1}{p}\leq \ln n+\ln m$$ and thus
\begin{align*}
\theta=O\left(\sqrt{\frac{(\ln n+\ln m)^{3+2\varepsilon}}{m}}\right)=o(1)
\end{align*}
using the main assumption on $m$ and $n$ from the Main Theorem. Since $\theta$ is tending to 0, we can replace the exponential function $e^{\theta}$ by a second order Taylor expansion and 
\begin{align*}
\mathbb E[\max_i \overline Y_i(t)]&\leq \frac{\ln n}{\theta}+\frac{mp\theta}{2}+o(\theta)\\&=O\left(\sqrt{m\ln n}\left[\left(\ln\frac{1}{p}\right)^{-1-\varepsilon}+p\left(\ln\frac{1}{p}\right)^{1+\varepsilon}\right]\right)\\
&= O\left( \sqrt{m\ln n}\left(\ln\frac{1}{p}\right)^{-1-\varepsilon} \right).
\end{align*}
Therefore, we conclude
\begin{align*}
\sum_{t=1}^{s nm}\frac{\mathbb E[\max_i \overline Y_i(t)]}{t} &\lesssim \sqrt{m \ln{n}} \sum_{t=1}^{s m n} \frac{1}{t} \left(\ln \frac{mn}{t} \right)^{-1 - \varepsilon}.
\end{align*}
Comparing to the integral, we have
$$\sum_{t=1}^{s m n} \frac{1}{t} \left(\ln \frac{mn}{t} \right)^{-1 - \varepsilon} \lesssim \int_0^{s} \frac{1}{x} \left(\ln \frac{1}{x} \right)^{-1 - \varepsilon} dx.$$
The integrand has an antiderivative in closed form
$$ \int \frac{1}{x} \left(\ln \frac{1}{x} \right)^{-1 - \varepsilon} dx = \frac{1}{\varepsilon} \left( \ln\frac{1}{x} \right)^{-\varepsilon}$$
allowing us to deduce that
$$ \int_0^{s} \frac{1}{x} \left(\ln \frac{1}{x} \right)^{-1 - \varepsilon} dx =\frac{1}{\varepsilon} \left( \ln \frac{1}{s} \right)^{-\varepsilon}$$
which, for fixed $\varepsilon$ tends to 0 provided that $s$ tends to 0. In the second regime, $p$ close to 1, we choose 
\begin{align*}
\theta=\sqrt{\frac{2\ln n}{m}}
\end{align*}
which is guaranteed to converge to 0 as $m,n \rightarrow \infty$. A Taylor expansion and the observation $p\leq 1$ shows
\begin{align*}
\mathbb E[\max_i \overline Y_i(t)]\leq \frac{\ln n}{\theta}+\frac{mp\theta}{2}+o(\theta)=O(\sqrt{m\ln n}).
\end{align*}
From here, we conclude since
\begin{align*}
\sum_{t=(1-s)mn}^{ nm -1}\frac{\mathbb E[\max_i \overline Y_i(t)]}{t} &\lesssim \sqrt{m \ln{n}} \sum_{t=(1-s)mn}^{m n -1}\frac{1}{t} \lesssim s \sqrt{m\ln n}
\end{align*}
which again tends to 0 as $s \rightarrow 0$.
\subsection{The main term}
As for the main term, we need sharp asymptotics for the maximum of independent binomials. This amounts to controlling their tail probabilities. The following is a corollary of a more general result by Feller \cite{feller}.
\begin{lemma}[Feller \cite{feller}]\label{fellermagic}
Let $Y_{i}(t) \sim \emph{Bin}(m,p)$ be i.i.d. and $p = t/(nm)$. Let
\begin{align*}
Z_i(t)=\frac{Y_i(t)-mp}{\sqrt{mp(1-p)}}
\end{align*}
and assume that $p, 1-p\geq s_{n,m}$, where $s=s_{n,m}$ is chosen so that 
\begin{align*}
\frac{\ln^3n}{ms(1-s)}=o(1), \quad \frac{m^{6/7}}{ms(1-s)}=o(1).
\end{align*}
Then, one has
\begin{align*}
\mathbb E[\max Z_i(t)]= \sqrt{2\ln n} + \mathcal{O}(1)
\end{align*}
with a uniform error bound on all such $p$s.
\end{lemma}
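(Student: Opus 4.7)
The overall strategy is to reduce the claim to the classical extreme-value statement $\mathbb E[\max_i G_i] = \sqrt{2 \ln n} + O(1)$ for i.i.d.\ standard normals $G_i$, which is accomplished by invoking Feller's sharp Cram\'er-type tail expansion for standardised binomials. Concretely, the assumptions on $s = s_{n,m}$ are chosen precisely so that
\[
\mathbb P(Z_i > x) = (1 - \Phi(x))(1+o(1))
\]
holds uniformly for $p \in [s, 1-s]$ and $0 \leq x \leq A\sqrt{\ln n}$ with $A$ an arbitrary constant: the Cram\'er correction is of order $x^3/\sqrt{mp(1-p)} = O((\ln n)^{3/2}/\sqrt{ms(1-s)}) = o(1)$ by the first hypothesis, while the second hypothesis ensures that Feller's theorem is applicable up to deviations of order $\sqrt{\ln n}$. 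The argument then splits into matching upper and lower bounds.

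For the upper bound, set $M = \max_i Z_i$ and $z_* = \sqrt{2\ln n}$, and write $\mathbb E[M] \leq z_* + \int_{z_*}^{\infty} \mathbb P(M > t)\,dt$. I would split the integral at $T = A \sqrt{\ln n}$ for a large constant $A$. On $[z_*, T]$ the union bound together with the Cram\'er expansion gives $\mathbb P(M > t) \leq n(1-\Phi(t))(1+o(1)) \leq C n t^{-1} e^{-t^2/2}$; since $n e^{-z_*^2/2} = 1$, this piece integrates to $O(1/\ln n)$. Beyond $T$, the standard one-sided Chernoff/Bennett bound $\mathbb P(Z_1 > t) \leq e^{-ct^2}$ is valid uniformly for all $t$, and after the union bound contributes $o(1)$. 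Hence $\mathbb E[M] \leq \sqrt{2 \ln n} + O(1)$.

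For the matching lower bound, fix a large $C > 0$ and let $z_- = \sqrt{2 \ln n} - C$. The Cram\'er expansion, used from below, gives
\[
p_- := \mathbb P(Z_1 > z_-) \geq c\,\frac{e^{-z_-^2/2}}{z_-} \geq \frac{c\, e^{C\sqrt{2\ln n}}}{n\sqrt{\ln n}},
\]
so $n p_- \to \infty$ and $\mathbb P(M > z_-) \geq 1 - (1-p_-)^n \geq 1 - e^{-n p_-} \to 1$. On the event $\{M \leq 0\}$ one has $|M| = -\max_i Z_i \leq -Z_1 = |Z_1|$, so $\mathbb E[M^-] \leq \mathbb E|Z_1| \leq \sqrt{\mathbb E Z_1^2} = 1$, and therefore $\mathbb E[M] \geq z_- \,\mathbb P(M > z_-) - \mathbb E[M^-] \geq \sqrt{2 \ln n} - C - O(1)$. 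Combining the two bounds yields the lemma.

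The main obstacle is verifying Feller's Cram\'er expansion with uniform control over $p$ and $x$ in the ranges actually used; the two seemingly technical conditions on $s_{n,m}$ in the statement are exactly what is needed to buy this uniformity, whereas the extremal-statistic calculation itself is routine once that uniformity is in place.
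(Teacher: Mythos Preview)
Your strategy is essentially the paper's: invoke Feller's moderate-deviation expansion to replace binomial tails by Gaussian tails in the relevant range, then run the standard extreme-value computation via the tail integral. The difference is in how the upper-bound integral $\int_{z_*}^{\infty}\mathbb P(M>t)\,dt$ is truncated. You split at $T=A\sqrt{\ln n}$ and handle $t>T$ by a Chernoff/Bennett bound; the paper instead pushes the Feller expansion all the way to $x=\sqrt{2(\ln n+m^{1/7})}$ and, beyond that, uses only the crude bound $\mathbb P(M>x)\leq n\,\mathbb P(Z_1>x)$ frozen at the cutoff together with $|Z_i|\leq M\leq m$, yielding the term $Me^{-m^{1/7}}=o(1)$. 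It is precisely this larger cutoff that consumes the second hypothesis $m^{6/7}/(ms(1-s))=o(1)$, since one needs $x^3/\sqrt{mp(1-p)}\to 0$ at $x\sim m^{1/14}$; your description of that hypothesis (``ensures Feller's theorem is applicable up to deviations of order $\sqrt{\ln n}$'') is therefore off---the \emph{first} hypothesis already covers that range, and your Chernoff route does not actually use the second one at all. One caveat on your side: a blanket sub-Gaussian bound $\mathbb P(Z_1>t)\leq e^{-ct^2}$ with $c$ independent of $p$ and valid for all $t$ is not literally available (Bernstein degrades to a linear exponent once $t\gtrsim\sqrt{mp(1-p)}$), but since $\sqrt{mp(1-p)}\geq\sqrt{ms(1-s)}\gg\sqrt{\ln n}$ the sub-Gaussian regime covers the range you need and the argument goes through. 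Your lower bound via $z_-=\sqrt{2\ln n}-C$ and $\mathbb P(M>z_-)\to 1$ is clean and in fact tidier than the paper's version.
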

The assumption in the Main Theorem guarantees that we can find a sequence
$s_{n,m}=m^{-\varepsilon'}$ for some $\varepsilon'=\varepsilon'(\varepsilon)$ sufficiently small. Therefore, we will use the notation $O_{\varepsilon}(1)$ to indicate that the error will be a function of $\varepsilon$.
\begin{proof}
We follow the notation of Theorem $1$ by Feller \cite{feller} applied to binomial random variables, which gives an uniform estimate
\begin{align*}
\mathbb P(Z_i(t)\geq x)=(1-\Phi(x))\left(1+O\left(\frac{x^3}{\sqrt{mp(1-p)}}\right)\right)
\end{align*}
where $\Phi$ denotes the cumulative distribution of a standard normal random variable.
In particular, for $x=\sqrt{2\ln n}$ the error is small by our assumption and we can write the error term as $o_{\varepsilon}(1)$. Using, for $x=\sqrt{2\ln n}$, the elementary estimate
\begin{align*}
n(1-\Phi(x))=n\frac{e^{-x^2/2}}{\sqrt{2\pi}x}\left(1+O\left(1/x^2\right)\right)\geq \frac{c}{\sqrt{\ln n}},
\end{align*}
for some absolute constant $c$ and all $n\geq 2$, we derive the bound
\begin{align*}
\mathbb P(\max Z_i(t)\leq x)&=\left(\mathbb P(Z_i(t)\leq x)\right)^n\\&=\left(1-\frac{n\mathbb P(Z_i(t)\geq x)}{n}\right)^n\\&=1-O\left(\frac{1}{\sqrt{\ln n}}(1+o_{\varepsilon}(1))\right) 
\end{align*}
From this, we get a lower bound on expectation
\begin{align*}
\mathbb E[\max Z_i(t)]&\geq x \cdot \mathbb P(\max \mathbb Z_i\leq x)=\sqrt{2\ln n}+O_{\varepsilon}(1). 
\end{align*}
As for the upper bound, we slightly extend the range and consider values of $x$ up to $x\leq \sqrt{2(\ln n+m^{1/7})}$. This range is still admissible since
$$\frac{x^3}{\sqrt{mp(1-p)}}\rightarrow 0$$
owing to our assumptions. Using the standard bound
$$ 1 - \Phi(x) \leq \frac{e^{-\frac{x^2}{2}}}{\sqrt{2\pi} x}$$
for cumulative distribution function of the Gaussian, we infer that if $x=\sqrt{2(\ln n+c)}$ for some $0\leq c\leq m^{1/7}$, then
\begin{align*}
n(1-\Phi(x))\leq \frac{ne^{-\frac{x^2}{2}}}{\sqrt{2\pi}x}=O\left(\frac{e^{-c}}{\sqrt{2\ln n}}\right)
\end{align*}
from which we obtain
\begin{align*}
n \cdot\mathbb P(Z_i(t)\geq x)= n(1-\Phi(x))(1+o_{\varepsilon}(1))=O_{\varepsilon}\left(\frac{e^{-c}}{\sqrt{\ln n}}\right).
\end{align*}
In order to bound the expectation, let $$M=\sqrt{m\max\left(\frac{1-p}{p},\frac{p}{1-p}\right)}$$ 
be the maximum value of $|Z_i|$ Notice that, for instance, $M\leq m$ owing to our assumption on $p$ and $1-p$. Then we obtain
\begin{align*}
\mathbb E[\max Z_i(t)]&\leq \int_0^{M}\mathbb P(\max Z_i(t)\geq x)dx\\&\leq \sqrt{2\ln n}+\int_{{\sqrt{2\ln n}}}^{\sqrt{2\ln n+m^{1/7}}}n\mathbb P(Z_i(t)\geq x)dx+Me^{-m^{1/7}} 
\end{align*}
The last term is certainly $o(1)$, while a change of variable shows that the second term is 
\begin{align*}
\int_{{\sqrt{2\ln n}}}^{\sqrt{2\ln n+m^{1/7}}}n\mathbb P(Z_i(t)\geq x)dx=O_{\varepsilon}\left( \int_0^{\infty}\frac{e^{-c}}{\sqrt{2\ln n+c}}dc\right)=O_{\varepsilon}\left(\frac{1}{\sqrt{\ln n}}\right).
\end{align*}
Collecting all the pieces, we have
\begin{align*}
\mathbb E[\max_i Z_i(t)]=\sqrt{2\ln n}+O_{\varepsilon}(1).
\end{align*}
\end{proof}

\subsection{Proof of the Main Result} \label{prooof}
\begin{proof}
Using Lemma \ref{fellermagic}, we have
\begin{align*}
\mathbb E\left[\max_i Y_i(t)-\frac{t}{n}\right] = \sqrt{m p(1-p)} \sqrt{2 \ln n } + \mathcal{O}(\sqrt{m p(1-p)})
\end{align*}
with error term uniform for all $p, 1-p\geq s\rightarrow 0$. Combining Lemma \ref{randomizing} with the control on the tail, we have
\begin{align*}
    S_{n,m}-m  &=   \tilde S_{n,m}-m  + \mathcal{O}(\sqrt m+\ln n)\\
    &= \sum_{t=1}^{nm}\frac{\mathbb E[\max_i Y_i(t)]}{t} - m + \mathcal{O}(\sqrt m+\ln n)\\
    &= \sum_{t= 1}^{nm}\frac{\mathbb E[\max_i \overline Y_i(t)]}{t}+ \mathcal{O}(\sqrt m+\ln n). 
\end{align*}
At this point, we split the sum into the three regions
$$ \sum_{t=1}^{mn} = \sum_{t=1}^{smn} + \sum_{t=smn}^{(1-s)mn} + \sum_{t=(1-s)mn}^{mn}.$$
As was shown in Section \S 2.3, as long as $s \rightarrow 0$, the first and the third sum
are $o(\sqrt{m \ln{n}})$. The sum in the middle, provided $s$ does not tend to 0 too quickly,
will then contribute
$$ \sum_{t= smn}^{(1-s)nm}\frac{\mathbb E[\max_i \overline Y_i(t)]}{t} = \left(1+\mathcal{O}\left( \frac{1}{\sqrt{\ln{n}}}\right)\right)\sum_{t= smn}^{(1-s)nm}\frac{ \sqrt{m p(1-p)} \sqrt{2 \ln n } }{t}.$$
The sum can now be simplified to
$$ \sum_{t= smn}^{(1-s)nm}\frac{ \sqrt{m p(1-p)} \sqrt{2 \ln n } }{t} = \sqrt{2 m \ln{n}} \sum_{t= smn}^{(1-s)nm}\frac{ \sqrt{ p(1-p)}  }{t} $$
which, recalling $p = t/mn$ leads, as $s \rightarrow 0$, to the Riemann sum
$$\int_0^1\sqrt{\frac{1-p}{p}}dp = \frac{\pi}{2}.$$
\end{proof}
\section*{Acknowledgment}
We warmly thank Persi Diaconis for suggesting the problem and for some useful references.
 \bibliographystyle{abbrv}
  \bibliography{Finalversion.bib}

\end{document}